\numberwithin{equation}{section}
\newtheorem{thm}[equation]{Theorem}
\newtheorem{lem}[equation]{Lemma}
\newtheorem{prop}[equation]{Proposition}
\newtheorem{defn}[equation]{Definition}
\newcommand{\Rn}{\mathbb{R}^{n}}
\newcommand{\R}{\mathbb{R}}
\newcommand{\abs}[1]{\left\vert#1\right\vert}
\begin{document}

\title[]{Large-scale homogeneity and isotropy versus fine-scale condensation. A model based on Muckenhoupt type densities}

\author[]{Hugo Aimar}

\author[]{Federico Morana}


\keywords{Muckenhoupt weights; homogeneity; cosmological hypothesis; isotropy}

\begin{abstract}
In this brief note we aim to provide, through a well known class of singular densities in harmonic analysis, a simple approach to the fact that the homogeneity of the universe on scales of the order of a hundred millions light years is completely compatible with the fine-scale condensation of matter and energy. We give precise and quantitative definitions of homogeneity and isotropy on large scales. Then we show that Muckenhoupt densities have the ingredients required to a model for the large-scale homogeneity and the fine-scale condensation of the universe. In particular, these densities can take locally infinitely large values (black holes) and at once, in the large scales they are independent of the location. We also show some locally singular densities satisfying the large scale isotropy property.
\end{abstract}

\maketitle

\section[intro]{Introduction}\label{S.1}
The cosmological principle of the universe states that the universe is homogeneous and isotropic at large scales. Large means here lengths larger than the diameter of galaxy clusters (see Chapter~27 of \cite{MTWgravitation}). But, of course, no homogeneity could be expected at small scales. Planets, stars, galaxies are real inhomogeneities in which there is concentration of mass and energy.

\medskip

From the point of view of mathematics, homogeneity has also been considered from several points of view. Perhaps the most simple but non-quantitative meaning is that of topological homogeneity. In this approach the expression ``the space looks the same no matter where you are'' is interpreted from the topological perspective. Hence a topological space $X$ is homogeneous if for every couple of points $x$ and $y$ in $X$ there exists a homeomorphism $f:X\rightarrow X$ such that $f(x)=y$.  This means that
the universe looks topologically the same about $x$ than about $y$. Nevertheless this concept does not take into account the local heterogeneities and it is not quantitative.

\medskip

For simplicity, assume that the ``substance'' of an $n$-dimensional Euclidean universe is distributed according to a density $\rho(x)$. Here $x=(x_1,\cdots,x_n)$ is a point of $\R^n$ and $\rho(x)\geq 0$.

\medskip

Let us proceed to give a plausible definition of homogeneity in the large scales for a general density $\rho(x)$ of our $n$-dimensional space.  Assume that our universe has two astronomers, located at different points $x^1$ and $x^2$.  Assume that their astronomical skills grow with the same rhythm. In particular they are able to weigh their own neighborhoods. Set $B(x^i, R),\ i=1,2$, to denote the euclidean ball centered at $x^i$ with radius $R>0$. Precisely $B(x^i,R)=\{x\in\R^n:|x-x^i|<R\}$. At the beginning their observations will be disjoint, when $R$ increases they will start sharing part of their observable universe. When $R$ is very large, they will share most of their measurements but the two observed regions will never coincide.

\medskip

\begin{figure}[!tbp]
	\begin{subfigure}[b]{0.33\textwidth}
		\includegraphics[width=\textwidth, height=\textwidth]{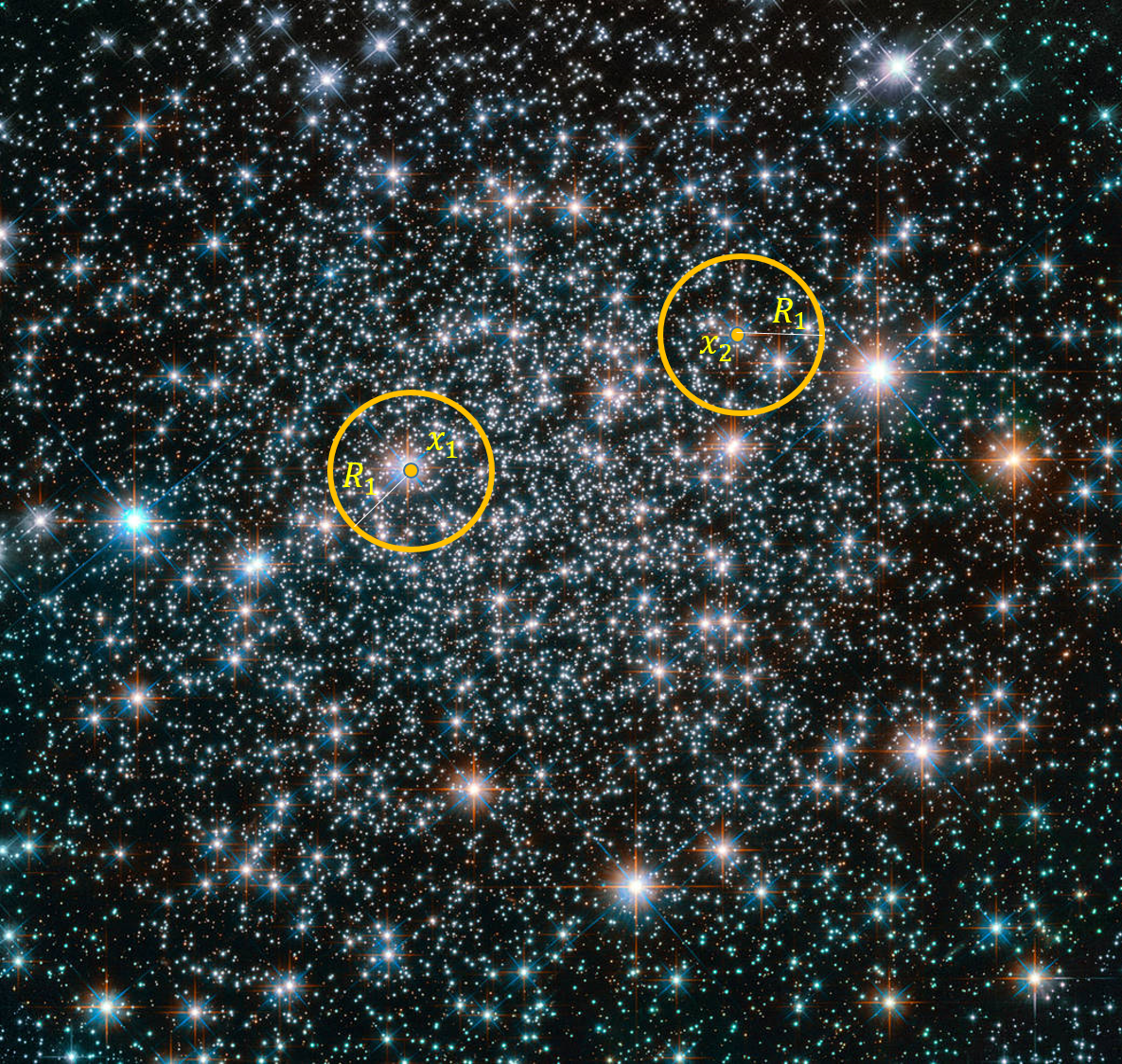}
		\label{fig:f1}
	\end{subfigure}
	\begin{subfigure}[b]{0.33\textwidth}
		\includegraphics[width=\textwidth, height=\textwidth]{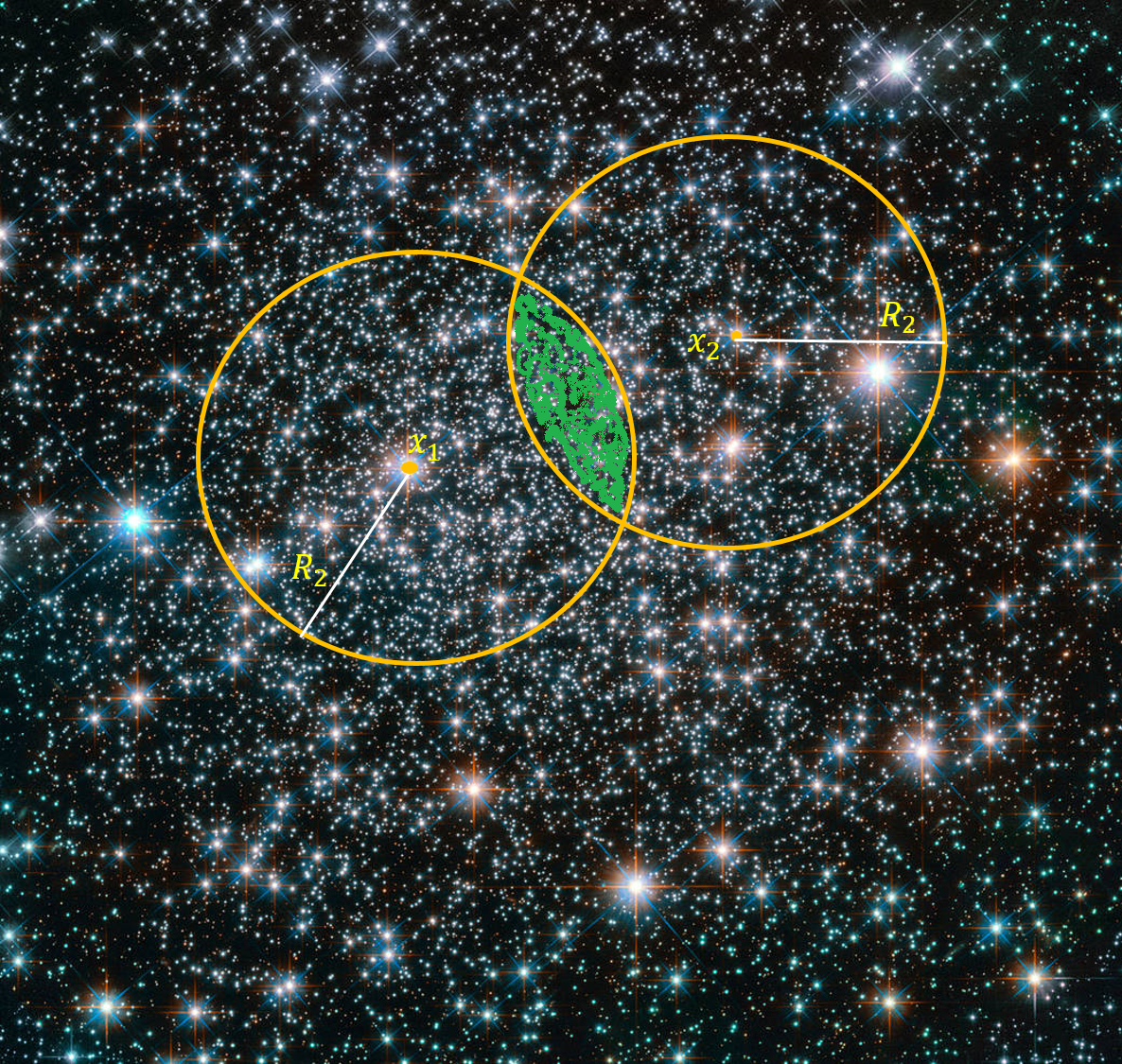}
		\label{fig:f2}
	\end{subfigure}
	\begin{subfigure}[b]{0.33\textwidth}
		\includegraphics[width=\textwidth, height=\textwidth]{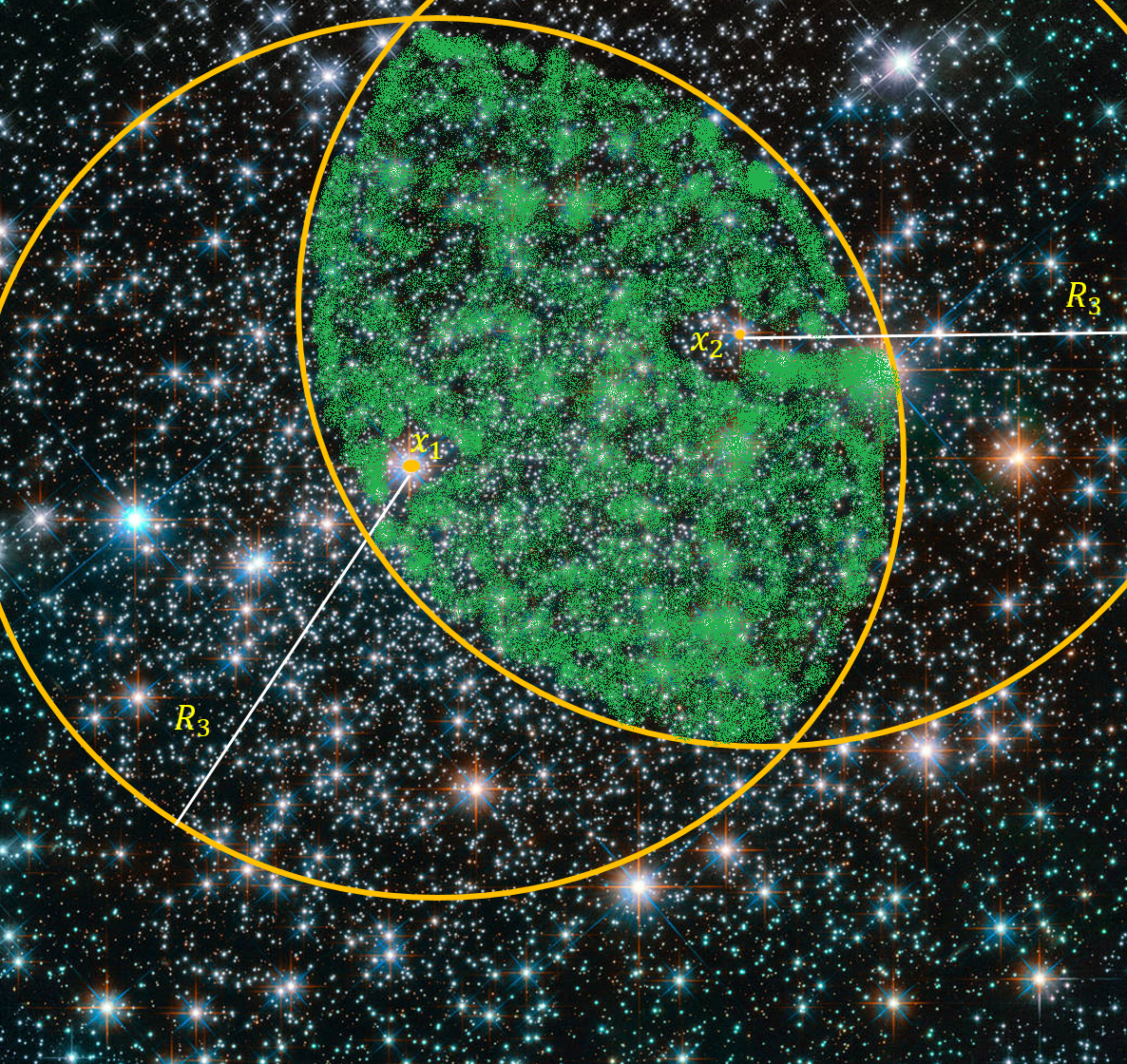}
		\label{fig:f3}
	\end{subfigure}
	\caption{Two observers at $x^1$ and $x^2$ improving their astronomical skills. (NASA Goddard. \href{https://images.nasa.gov/details-GSFC_20171208_Archive_e000282}{https://images.nasa.gov/details-GSFC\_20171208\_Archive\_e000282})}
\end{figure}

In our elementary interpretation, the two observers are drawing the increasing functions

$$ M_1(R)=\int_{B(x^1,R)}\rho(x)\, dx $$

\noindent and

$$ M_2(R)=\int_{B(x^2,R)}\rho(x)\, dx $$

\noindent respectively. We say that the universe, modeled by $\rho(x)$, is homogeneous in the large scales if no matter what the locations $x^1$ and $x^2$ are, the two observers are weighing the same mass for $R$ large. More precisely if
\begin{equation}\label{homog.1}
    \frac{M_1(R)}{M_2(R)} \xrightarrow[R\to\infty]{}1
\end{equation}
for every choice $x^1$, $x^2$ of points in $\Rn$.
It is worthy noticing that not every locally integrable density satisfies this property. In fact, as it is easy to prove, exponential growth is not allowed.

\medskip
The question is whether or not there are nontrivial densities satisfying \eqref{homog.1}.  Non triviality should allow very strong local heterogeneities. Not only objects of large mass but also points of infinitely large density in order to include, at least in an heuristic way, an elementary model for black holes.

\medskip

We aim to show that there is a huge class of densities $\rho$ in $\R^n$ fulfilling this two apparently contradictory conditions: homogeneity in the large scales, in the sense of \eqref{homog.1}, and strong local heterogeneity allowing heavy objects and locally unbounded density.

\medskip

The class of densities that we shall consider is the well known, in harmonic analysis, class of Muckenhoupt weights. See \cite{Muckenhoupt72}, \cite{GarciaCuervaRubioFranciaBook}, \cite{CoifmanFefferman74}.

\medskip

Section~\ref{S.2} is devoted to a brief introduction of this class of densities and to exhibit some nontrivial examples, including densities that are singular on sets of positive dimension. In Section~\ref{S.3} we prove that the densities defined by Muckenhoupt weights provide the desired homogeneity at large scales in the sense of \eqref{homog.1}. In Section~\ref{S.4} we briefly discuss the corresponding isotropy condition with these types of mass distributions.

\section[MuckDens]{Muckenhoupt densities}\label{S.2}

Let us start by introducing the Muckenhoupt classes in the Euclidean setting $\Rn$.  There are extensive treatments of the subject in harmonic analysis such as \cite{Muckenhoupt72}, \cite{GarciaCuervaRubioFranciaBook}, \cite{CoifmanFefferman74}. These approaches focus usually more on operator theory, than into the  geometrical properties associated to these densities. With $B$ we shall denote the Euclidean open balls of $\Rn$.  We shall write $B(x,r)$ when the center $x\in\Rn$ and the radius $r>0$ of $B$ have to be explicit. A positive measurable and locally integrable function $\rho$ defined in $\Rn$ satisfies always, by Schwarz inequality, the following upper estimate for the volume $|B|$ of a ball $B$
\begin{align*}
    |B|^2 & = \left(\int_B\,dx\right)^2=
    \left(\int_B\rho^{\frac{1}{2}}\rho^{-\frac{1}{2}}\,dx\right)^2\\
    & \leq \left(\int_B\rho\,dx\right)\left(\int_B\rho^{-1}\,dx\right).
\end{align*}
for every ball $B$. More generally, from H\"older inequality with $1<p<\infty$ and $\frac{1}{q}+\frac{1}{p}=1$, we have also
\begin{align*}
|B|^p & = \left(\int_B\, dx\right)^p = \left(\int_B\rho^{\frac{1}{p}}\rho^{-\frac{1}{p}}\, dx\right)^p\\
& \leq \left\{\left(\int_B\rho\,
dx\right)^{\frac{1}{p}}\left(\int_B\rho^{-{\frac{q}{p}}}\,
dx\right)^{\frac{1}{q}}\right\}^p\\
& = \left(\int_B\rho\, dx\right)\left(\int_B\rho^{-\frac{1}{p-1}}\,
dx\right)^{p-1}
\end{align*}
again for every ball $B$ and every density $\rho$.

\medskip

It is easy to provide examples of densities in which the above estimates for the volume of $B$ are far away from being optimal. On the other hand it is clear that the above inequalities become equations for balls in regions where the density $\rho$ is constant.
And that $|B|^p$ and $\left(\int_B\rho\,dx\right) \left(\int_B\rho^{-\frac{1}{p-1}}\, dx\right)^{p-1}$ are of the same order if there exist positive and finite constants $0<c_1\leq \rho(x)\leq c_2<\infty$ for every $x\in\Rn$. In other words, these two magnitudes become equivalent when $\rho$ is almost constant. On the other hand, from the point of view of the model for a density of the universe, the property $0<c_1\leq \rho(x)\leq c_2<\infty$ is unrealistic because it prevents having very localized and massive objects and even for having some points with vanishing density. Precisely, Muckenhoupt densities deserve analysis and particular consideration because the quantities $|B|^p$ and $\left(\int_B\rho\,dx\right)\left(\int_B\rho^{-\frac{1}{p-1}}\,dx\right)^{p-1}$ are equivalent and $\rho$ can still have singularities, points of infinity density and points of vanishing density.

\smallskip

\begin{defn}
Let $\rho$ be a density in $\Rn$ and $1<p<\infty$. We shall say that $\rho$ is a Muckenhoupt density, of the class $A_p$, if there exists a constant $C$ such that
$$ \left(\int_B\rho\,dx\right)\left(\int_B\rho^{-\frac{1}{p-1}}\,dx\right)^{p-1}\leq C|B|^p $$
for every ball $B$ in $\Rn$.

A density $\rho$ is said to belong to $A_1$ if there exists a constant $C$ such that
\begin{equation*}
	\int_B \rho dx \leq C\inf_B \rho\ . \abs{B}
\end{equation*}
for every ball $B$ in $\mathbb{R}^n$.

A density $\rho$ is said to belong to $A_{\infty}$ if $\rho\in A_p$ for some $1\leq p<\infty$.
\end{defn}

\smallskip

Benjamin Muckenhoupt introduced in \cite{Muckenhoupt72} these densities as the exact classes for the boundedness of Hardy-Littlewood and singular integrals operators in Lebesgue spaces. In this brief introduction we shall only collect the measure theoretical aspects of Muckenhoupt densities allowing us to prove, in the next section, the homogeneity in the large scales of a
universe modeled by this type of densities $\rho$.

\medskip

The next proposition contains the mains properties of Muckenhoupt densities related with the subsequent development of our model.

\begin{prop}\label{P2.1}
Let $\rho$ be a density in $\Rn$ the $n$-dimensional Euclidean space. Then
\begin{description}
  \item[(a)] for $1<p<q$ and $\rho\in A_p$ we have that $\rho\in A_q$;
  \item[(b)] for $p>1$ and $\rho\in A_p$ there exists a constant $C$ such that the inequality
  $$ \frac{|E|}{|B|}\leq C\left(\frac{\rho(E)}{\rho(B)}\right)^{1/p}, $$
  $\rho(A)=\int_A\rho\,dx$, holds for every ball $B$ and every measurable subset $E$ of $B$;
  \item[(c)] for $p>1,\, \rho\in A_p$ and $C$ as in (b) the mass satisfies the following doubling property
  $$ \rho(Bx,2R))\leq 2^{np}C^p\rho(B(x,R)) $$
  for every $x\in \Rn$ and every $R>0$;
  \item[(d)] let $\rho$ be any $A_{\infty}$ density, then there exist two constants $\widetilde{C}>0$ and $\gamma>0$ such that the inequality
  $$ \frac{\rho(E)}{\rho(B)}\leq \widetilde{C}\left(\frac{|E|}{|B|}\right)^{\gamma}, $$

\noindent holds for every ball $B$ and every measurable subset $E$ of $B$.

\end{description}
\end{prop}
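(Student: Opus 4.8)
The plan is to prove the four parts of Proposition~\ref{P2.1} in the stated order, since each part builds on the previous one. These are the classical structural properties of Muckenhoupt weights, so the strategy is to extract each from the defining inequality by elementary manipulations of H\"older's inequality and the self-improving geometry of balls.

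\medskip

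\textbf{Part (a).} To show $A_p\subset A_q$ for $p<q$, I would fix a ball $B$ and compare the two defining integrals. The term $\int_B\rho\,dx$ is common to both classes, so the work is to bound $\left(\int_B\rho^{-1/(q-1)}\,dx\right)^{q-1}$ in terms of $\left(\int_B\rho^{-1/(p-1)}\,dx\right)^{p-1}$. Since $q>p$ gives $\frac{1}{q-1}<\frac{1}{p-1}$, the exponent on $\rho^{-1}$ has shrunk, and I would apply H\"older's inequality on the ball $B$ with the pair of exponents $s=\frac{p-1}{q-1}>1$ and its conjugate to write $\int_B\rho^{-1/(q-1)}\,dx\leq\left(\int_B\rho^{-1/(p-1)}\,dx\right)^{1/s}|B|^{1/s'}$. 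Raising to the power $q-1$ and collecting the resulting power of $|B|$ should reproduce exactly the factor $|B|^{q}$ needed, with the same constant $C$. This is a routine exponent-bookkeeping computation.

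\medskip

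\textbf{Part (b).} For the comparison $\frac{|E|}{|B|}\leq C\left(\frac{\rho(E)}{\rho(B)}\right)^{1/p}$, the key idea is to apply H\"older's inequality to the indicator $\chi_E$ split as $\rho^{1/p}\rho^{-1/p}$ over the set $E\subset B$. This gives $|E|=\int_E\chi_E\,dx=\int_E\rho^{1/p}\rho^{-1/p}\,dx\leq\left(\int_E\rho\,dx\right)^{1/p}\left(\int_E\rho^{-1/(p-1)}\,dx\right)^{1/p'}$. I would then enlarge the domain of the second integral from $E$ to $B$ (using $\rho^{-1/(p-1)}\geq0$) and invoke the $A_p$ condition in the form $\left(\int_B\rho^{-1/(p-1)}\,dx\right)^{p-1}\leq C|B|^p/\rho(B)$. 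Substituting and dividing by $|B|$ should yield the claim after one more round of exponent arithmetic; the factor $\rho(E)^{1/p}$ emerges from the first integral since $\int_E\rho\,dx=\rho(E)$.

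\medskip

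\textbf{Part (c).} This is a quick corollary of (b) applied to the doubling configuration: take the large ball $\widetilde{B}=B(x,2R)$ and let $E=B(x,R)$ be the concentric half. By (b), $\frac{|B(x,R)|}{|B(x,2R)|}\leq C\left(\frac{\rho(B(x,R))}{\rho(B(x,2R))}\right)^{1/p}$. Since the volume ratio of concentric Euclidean balls is $\frac{|B(x,R)|}{|B(x,2R)|}=2^{-n}$, rearranging and raising to the $p$-th power gives $\rho(B(x,2R))\leq 2^{np}C^p\,\rho(B(x,R))$, exactly the stated doubling bound. No new ideas are required here.

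\medskip

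\textbf{Part (d)} is the reverse-direction estimate and the genuine obstacle. The inequalities in (b)--(c) are one-sided: they bound volume ratios by mass ratios, but (d) asks for the opposite, bounding the mass ratio $\rho(E)/\rho(B)$ by a power of the volume ratio $|E|/|B|$, with some $\gamma>0$. This cannot be obtained by a single H\"older step; it requires the deeper self-improving property that an $A_\infty$ weight satisfies a reverse H\"older inequality, namely $\left(\frac{1}{|B|}\int_B\rho^{1+\delta}\,dx\right)^{1/(1+\delta)}\leq \frac{C}{|B|}\int_B\rho\,dx$ for some $\delta>0$. The standard route is a Calder\'on--Zygmund stopping-time argument proving this reverse H\"older inequality first, and then deriving (d): given $E\subset B$, apply H\"older with the pair $(1+\delta,(1+\delta)')$ to $\rho(E)=\int_E\rho\,\chi_E\,dx\leq\left(\int_B\rho^{1+\delta}\,dx\right)^{1/(1+\delta)}|E|^{\delta/(1+\delta)}$, bound the first factor by $\rho(B)|B|^{-\delta/(1+\delta)}$ via the reverse H\"older inequality, and collect powers to obtain $\frac{\rho(E)}{\rho(B)}\leq \widetilde{C}\left(\frac{|E|}{|B|}\right)^{\gamma}$ with $\gamma=\frac{\delta}{1+\delta}$. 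The hard part is the reverse H\"older inequality itself; since it is one of the cornerstone results in \cite{CoifmanFefferman74} and \cite{GarciaCuervaRubioFranciaBook}, I would most likely cite it rather than reprove the full stopping-time machinery, and then carry out only the short H\"older computation that converts it into the form (d).
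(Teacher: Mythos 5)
Your proposal is correct and takes essentially the route the paper intends: the paper gives no proof of Proposition~\ref{P2.1}, deferring to \cite{CoifmanFefferman74} and \cite{GarciaCuervaRubioFranciaBook} with the remark that (a)--(c) are simple consequences of the definition while (d) relies on Calder\'on--Zygmund arguments, and your proofs of (a)--(c) are exactly those simple consequences, while your treatment of (d) --- citing the reverse H\"older inequality from those same references and converting it into the stated estimate with $\gamma=\frac{\delta}{1+\delta}$ --- is the standard argument the paper is pointing to. One small correction: in (a) the H\"older exponent should be $s=\frac{q-1}{p-1}>1$, not $\frac{p-1}{q-1}$ (which is $<1$ when $p<q$); the displayed inequality you wrote is the one corresponding to the correct $s$, so the rest of the exponent bookkeeping, and the conclusion with the same constant $C$, goes through unchanged.
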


The proof of the above proposition can be found in \cite{CoifmanFefferman74} or \cite{GarciaCuervaRubioFranciaBook}.
The first three properties are simple consequences of the definition of $A_p$.
Property (d) instead relies on subtle arguments such as the Calder\'on-Zygmund decomposition technique.  On the other hand Property (d) shall be crucial in our proof of the large scale of homogeneity in the next section.

\smallskip

Let us give some examples of locally quite heterogeneous Muckenhoupt densities.  The first class of non trivial $A_{\infty}$ densities which are radial with respect to some point $x_0\in\Rn$ are the powers of the distance.  In fact $\rho(x)=|x-x_0|^{\beta}$ belongs to the Muckenhoupt class $A_p(\Rn)$ if and only if $-n<\beta<n(p-1)$.  Hence $\rho(x)=|x-x_0|^{\beta}$ belongs to $A_{\infty}$ if and only if $\beta>-n$.  When $\beta=0$ we recover the trivial constant weights but, when $-n<\beta<0$ we have a singularity in $x_0$ and $\rho$ can be arbitrarily large in any neighborhood of $x_0$.  On the other hand, when $\beta>0$, we have
densities that are unbounded at infinity, but they are still $A_{\infty}$ densities.  An extension of the above is due to Ricci and Stein \cite{RicciStein87} (see also \cite{SteinBook93}).

\smallskip

The singularities of the density can actually be concentrated on sets which are quite irregular. The most recent and interesting result concerning the structure of these sets is given in \cite{Anderson2022weakly}. The authors provide a necessary an sufficient condition on a set $F$ of the Euclidean space in order to produce $A_1$ Muckenhoupt weights as negative powers of $d(x,F)$. This condition is a weak form of porosity and essentially contains the situations described in \cite{DuLoG10}, \cite{AiCaDuTo14} and \cite{DyIhLeTuVa19}. The results are of the following type. The function $\rho(x)=d^{-\beta}(x,F)$ belongs to the class $A_1$ if and only if $F$ is (weakly) porous. In particular the function 
$$ \rho(x)=\frac{1}{\sqrt{d(x,F)}}$$
belongs to $A_\infty$ if $F$ is the smooth boundary of some open domain in $\mathbb{R}^n$.
Hence we can have densities with infinitely large values on very large sets of Hausdorff dimension less than $n$, for example on a surface of $\R^3$ .

\smallskip

Several important results of harmonic analysis provide tools for building singular weights. In particular the relation of Muckenhoupt classes with the space of functions of bounded mean oscillation.

\smallskip

In some sense these results revels that power laws are in some sense the extremals of the $A_{\infty}$ class of densities. No exponential behavior for a density, as illustrated in Section~\ref{S.1}, will produce the desired homogeneity in the large scales allowing strong heterogeneities in any bounded region of the space.

\section[TMainRes]{The main result}\label{S.3}

Let us formally state and prove the main result of this note.

\begin{thm}
Let $\rho(x)$ be an $A_{\infty}(\Rn)$ Muckenhoupt density. Then $\Rn$ with the density $\rho(x)$ is homogeneous at the large scales.
\end{thm}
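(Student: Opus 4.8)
The plan is to reduce the two--center comparison demanded by \eqref{homog.1} to a comparison of \emph{concentric} balls about a single point, and then to control the resulting discrepancy by property (d) of Proposition~\ref{P2.1}, which is the only genuinely quantitative ingredient required. First I would fix $x^1,x^2\in\Rn$ and set $d=|x^1-x^2|$; the case $x^1=x^2$ is trivial, so assume $d>0$ and take $R>d$. The triangle inequality gives the nested inclusions
$$ B(x^1,R-d)\subseteq B(x^2,R)\subseteq B(x^1,R+d), $$
since $|x-x^1|<R-d$ forces $|x-x^2|<R$, and $|x-x^2|<R$ forces $|x-x^1|<R+d$. Because $\rho\geq 0$, monotonicity of the mass $\rho(A)=\int_A\rho\,dx$ then yields
$$ \frac{\rho(B(x^1,R-d))}{\rho(B(x^1,R))}\leq \frac{M_2(R)}{M_1(R)}\leq \frac{\rho(B(x^1,R+d))}{\rho(B(x^1,R))}. $$
Thus it suffices to show that the outer and inner ratios, now built from balls sharing the center $x^1$, both tend to $1$ as $R\to\infty$; the conclusion \eqref{homog.1} follows by squeezing.

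Next I would treat the outer ratio by isolating the annulus $A_R=B(x^1,R+d)\setminus B(x^1,R)$. Additivity gives $\rho(B(x^1,R+d))=\rho(B(x^1,R))+\rho(A_R)$, so the outer ratio equals $1+\rho(A_R)/\rho(B(x^1,R))$, and everything reduces to showing $\rho(A_R)/\rho(B(x^1,R))\to 0$. Here I invoke property (d) of Proposition~\ref{P2.1} with the ball $B=B(x^1,R+d)$ and the subset $E=A_R$:
$$ \frac{\rho(A_R)}{\rho(B(x^1,R+d))}\leq \widetilde{C}\left(\frac{|A_R|}{|B(x^1,R+d)|}\right)^{\gamma}. $$
Since $|A_R|/|B(x^1,R+d)|=1-\left(R/(R+d)\right)^{n}\to 0$ as $R\to\infty$, the right-hand side vanishes in the limit; writing $\varepsilon_R=\rho(A_R)/\rho(B(x^1,R+d))$ and rearranging, $\rho(A_R)/\rho(B(x^1,R))=\varepsilon_R/(1-\varepsilon_R)\to 0$ as well, so the outer ratio tends to $1$. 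The inner ratio is handled identically, applying property (d) to $B=B(x^1,R)$ and the annulus $E=B(x^1,R)\setminus B(x^1,R-d)$, whose relative Lebesgue measure $1-\left((R-d)/R\right)^{n}$ again tends to $0$.

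The point I would stress is that essentially all the work is performed by property (d): the elementary geometric fact is merely that a fixed-width annulus occupies a vanishing \emph{fraction of the volume} of a large ball, and the $A_\infty$ hypothesis is precisely what upgrades this into a vanishing \emph{fraction of the mass}, uniformly and regardless of the possibly wild local behavior of $\rho$. There is thus no real obstacle left in the argument itself; the reduction to a common center is immediate and the volume estimates are routine. The genuine difficulty has already been absorbed into property (d), whose proof rests on the Calder\'on--Zygmund decomposition and is cited rather than reproduced. I would close by noting that this mechanism also clarifies the earlier remark that exponential growth is incompatible with \eqref{homog.1}: only the power-type control encoded in the $A_\infty$ class makes the thin-annulus mass negligible at large scales.
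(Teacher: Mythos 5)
Your proof is correct, and it takes a genuinely different---and in fact slightly leaner---route than the paper's. The paper decomposes both balls over their common intersection, writing $\rho(B(x^1,R))/\rho(B(x^2,R))$ as $\bigl(1+\rho(A_1)/\rho(B(x^1,R)\cap B(x^2,R))\bigr)\big/\bigl(1+\rho(A_2)/\rho(B(x^1,R)\cap B(x^2,R))\bigr)$ with $A_i$ the symmetric-difference pieces; to control these it needs the inclusions $B(x^i,R/2)\subset B(x^1,R)\cap B(x^2,R)$ for large $R$ together with the doubling property (c) of Proposition~\ref{P2.1} (to pass from $\rho(B(x^i,R/2))$ back to $\rho(B(x^i,R))$), and only then property (d) applied to the annuli $E_i$. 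You instead sandwich $B(x^2,R)$ between the concentric balls $B(x^1,R-d)$ and $B(x^1,R+d)$ and squeeze, so that everything reduces to property (d) applied to thin annuli about the single center $x^1$: the doubling property (c) is never invoked, and the factor $2^{np}C^p$ disappears from the final estimate, which depends only on $\widetilde{C}$, $\gamma$, $n$ and $d=|x^1-x^2|$. Both arguments run on the same essential engine---the $A_\infty$ estimate (d) applied to a fixed-width annulus whose relative volume $1-\left(1-O(d/R)\right)^n$ vanishes as $R\to\infty$---and both yield a quantitative rate of convergence in terms of the distance between the observers. What the paper's symmetric decomposition buys is a picture that directly mirrors the heuristic of two observers sharing an ever-larger common region of the universe; what yours buys is economy of hypotheses within Proposition~\ref{P2.1} and a cleaner constant, since the $A_p$ exponent $p$ and the constant $C$ from properties (b)--(c) never enter.
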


\begin{proof}
Let $x^1$ and $x^2$ be two points in $\Rn$ and let $R>0$ be given. With the notation introduced in Section~\ref{S.2} we have to prove that
$$\frac{\rho(B(x^1,R))}{\rho(B(x^2,R))}\xrightarrow[R\to\infty]{}1\,.$$
With the standard notation for intersections and differences of sets we may write

\begin{align*}
    \frac{\rho(B(x^1,R))}{\rho(B(x^2,R))} & = \frac{\rho(B(x^1,R)\cap B(x^2,R))+\rho(B(x^1,R)\setminus B(x^2,R))}{\rho(B(x^2,R)\cap B(x^1,R))+\rho(B(x^2,R)\setminus B(x^1,R))} \\
    & = \dfrac{1+\dfrac{\rho(A_1)}{\rho(B(x^1,R)\cap
    B(x^2,R))}}{1+\dfrac{\rho(A_2)}{\rho(B(x^2,R)\cap B(x^1,R))}}
\end{align*}

\noindent with $A_1=B(x^1,R)\setminus B(x^2,R)$ and
$A_2=B(x^2,R)\setminus B(x^1,R)$. Notice that

\begin{enumerate}[(1)]
  \item $A_1\subset E_1= B(x^1,R)\setminus B(x^1,R-|x^1-x^2|)\subset B(x^1,R)$;
  \item $A_2\subset E_2= B(x^2,R)\setminus B(x^2,R-|x^1-x^2|)\subset B(x^2,R)$;
  \item $B(x^1,\frac{R}{2})\subset B(x^1,R)\cap B(x^2,R)$ for $R$ large enough;
  \item $B(x^2,\frac{R}{2})\subset B(x^1,R)\cap B(x^2,R)$ for $R$ large enough.
\end{enumerate}

Now use (3), (4) and the doubling property (c) in Proposition~\ref{P2.1} and obtain

\begin{equation*}
    \frac{\rho(A_i)}{\rho(B(x^1,R)\cap B(x^2,R))} \leq
    \frac{\rho(A_i)}{\rho(B(x^i,R/2))}
    \leq 2^{np}C^p \frac{\rho(A_i)}{\rho(B(x^i,R))}
\end{equation*}
for $i=1,2$ and $p$ such that $\rho\in A_p(\Rn)$. The constant $C$ is that in (c) of Proposition~\ref{P2.1}. Let us now apply (1) and (2) above and then (d) in Proposition~\ref{P2.1}.  Doing so, since $E_i\subset B(x_i,R)\, (i=1,2)$ we get
\begin{align*}
    \frac{\rho(A_i)}{\rho(B(x^1,R)\cap B(x^2,R))}
    & \leq 2^{np}C^p\frac{\rho(E_i)}{\rho(B(x^i,R))} \\
    & \leq 2^{np}C^p\widetilde{C} \left(\frac{|E_i|}{|B(x^i,R)|}\right)^{\gamma}\\
    & = 2^{np}C^p\widetilde{C} \left[\frac{R^n-(R-|x^1-x^2|)^n}{R^n}\right]^{\gamma}\\
    & = 2^{np}C^p\widetilde{C}\left[1-\left(1-\frac{|x^1-x^2|}{R}\right)^n\right]^{\gamma}
\end{align*}
for $i=1,2$. Since the above expression tends to zero for $R\to \infty$, we have the desired homogeneity in the large scales, i.e.
$$\frac{\rho(B(x^1,R))}{\rho(B(x^2,R))}
\xrightarrow[R\to\infty]{}1\,.$$
Actually the last estimate provides a velocity of convergence in terms of the distance $|x^1-x^2|$ between the two observers.
\end{proof}

\section[Iso]{Isotropy}\label{S.4}

The second component of the cosmological principle is the large scale isotropy of the universe. Roughly speaking, isotropy entails that from every point of the space the universe looks, at large scales, the same in any direction.

Assume as before that the density of the universe of $n$ dimensions is distributed according to the nonnegative density function $\rho(x)=\rho(x_1,\dots,x_n)$. A quantitative version of the above qualitative formulation can be given by the fact that
\begin{equation}\label{iso.def}
	\lim\limits_{R\to\infty}\, \frac{\lambda(x_1,\vec{v_1},R)}{\lambda(x_2,\vec{v_2},R)} = 1
\end{equation}
for every $x_1,x_2 \in \Rn$ and $\vec{v_1},\vec{v_2}\in S^{n-1}$, where
\begin{equation*}
\lambda(x,\vec{v},R) = \int_{0}^{R} \rho(x+t\vec{v})\,dt    
\end{equation*}
for $x\in\Rn$ and $\vec{v}\in S^{n-1}$ the unit sphere in $\Rn$.

Notice that in order to have the function $\lambda$ well defined for every $x\in\Rn$, every $R>0$ and every $\vec{v}\in S^{n-1}$ we need more than the local integrability of $\rho$. Nevertheless some local singularities of $\rho$ are still possible with power laws weaker than those providing homogeneity. The next result shows a simple example of isotropy and homogeneity with a singularity.

\begin{lem}\label{lemma:functionlambdaaboveandbelow}
	Let $x_0$ be a fixed point in $\Rn$ and $0<\alpha<1$. Let $\rho(x)=\abs{x-x_0}^{-\alpha}$, then
	\begin{align*}
	 \frac{1}{1-\alpha} \left[ \big(\abs{x-x_0}+R\big)^{1-\alpha} - \abs{x-x_0}^{1-\alpha} \right]  &\leq \lambda(x,\vec{v},R) \\ 
	 &\leq \frac{\abs{x-x_0}^{1-\alpha}}{1-\alpha} + \frac{1}{1-\alpha} \big(R-\abs{x-x_0}\big)^{1-\alpha}   
	\end{align*}
	for every $x\in\Rn$, $\vec{v}\in S^{n-1}$ and $R>\abs{x-x_0}$.
\end{lem}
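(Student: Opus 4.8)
The plan is to reduce the whole statement to a one-dimensional integral and then control the integrand by two elementary triangle inequalities applied to the moving point $x+t\vec v$. Writing $d=\abs{x-x_0}$ for brevity, the key observation is the two-sided bound $\abs{t-d}\le \abs{x+t\vec v-x_0}\le d+t$, valid for every $t\ge 0$ because $\abs{t\vec v}=t$; the upper bound is the ordinary triangle inequality and the lower bound is the reverse one. Raising to the power $-\alpha$ (which reverses the inequalities, since $\rho(s)=s^{-\alpha}$ is decreasing) converts these geometric estimates directly into pointwise bounds on the integrand $\rho(x+t\vec v)=\abs{x+t\vec v-x_0}^{-\alpha}$.

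For the lower bound I would use $\abs{x+t\vec v-x_0}\le d+t$, which gives $\rho(x+t\vec v)\ge (d+t)^{-\alpha}$. Integrating over $[0,R]$ and computing $\int_0^R (d+t)^{-\alpha}\,dt=\frac{1}{1-\alpha}\big[(d+R)^{1-\alpha}-d^{1-\alpha}\big]$ reproduces the left-hand estimate verbatim. This step needs no case analysis.

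For the upper bound I would use the reverse triangle inequality $\abs{x+t\vec v-x_0}\ge\abs{t-d}$, so that $\rho(x+t\vec v)\le\abs{t-d}^{-\alpha}$, and then split the integral at $t=d$, which lies in $[0,R]$ precisely because of the hypothesis $R>d$. On $[0,d]$ the substitution $u=d-t$ yields $\int_0^d (d-t)^{-\alpha}\,dt=\frac{d^{1-\alpha}}{1-\alpha}$, and on $[d,R]$ the substitution $u=t-d$ yields $\int_d^R (t-d)^{-\alpha}\,dt=\frac{(R-d)^{1-\alpha}}{1-\alpha}$. Adding the two pieces gives exactly the right-hand estimate.

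The only point deserving care, and the one place where $0<\alpha<1$ is genuinely used, is the finiteness of these integrals. The extreme configuration is the ray pointing straight at the singularity, $\vec v=-(x-x_0)/d$, where $x+t\vec v-x_0$ vanishes at $t=d$ and the integrand truly blows up. The reverse triangle inequality isolates this behaviour as the integrable singularity $\abs{t-d}^{-\alpha}$, which is summable near $t=d$ exactly because $\alpha<1$; this is what simultaneously guarantees that $\lambda(x,\vec v,R)$ is well defined and that the claimed upper bound is finite. Beyond bookkeeping the splitting point $t=d$ and invoking $\alpha<1$ for integrability, I expect no real obstacle.
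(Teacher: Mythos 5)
Your proof is correct and follows essentially the same route as the paper: both establish the pointwise two-sided bound $\abs{t-d}\le\abs{x+t\vec v-x_0}\le d+t$ on the integrand and then integrate, splitting the upper-bound integral at $t=d$. The only cosmetic difference is that you invoke the triangle and reverse triangle inequalities directly, whereas the paper obtains the same bound by expanding $\abs{(x-x_0)+t\vec v}^2$ and comparing with the extremal radial directions $\pm(x-x_0)/\abs{x-x_0}$ via the Cauchy--Schwarz estimate on $(x-x_0)\cdot\vec v$.
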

\begin{proof}
	Let us first estimate pointwise from above and from below the restriction of the density $\rho$ to the half line $\{x+t\vec{v}: t>0\}$. Since $-\abs{x-x_0}\leq (x-x_0)\cdot\vec{v}\leq \abs{x-x_0}$, we have, for $t>0$ that $-2t\abs{x-x_0}\leq 2t (x-x_0)\cdot\vec{v}\leq 2t\abs{x-x_0}$. Hence
	\begin{equation*}
	    \abs{(x-x_0) - t\frac{x-x_0}{\abs{x-x_0}}}^2\leq \abs{(x-x_0)+t\vec{v}}^2\leq \abs{(x-x_0) + t\frac{x-x_0}{\abs{x-x_0}}}^2.	
	 \end{equation*}
	 Then for $\rho(x+t\vec{v})=\abs{(x+t\vec{v})-x_0}^{-\alpha}$, we have
	 \begin{equation*}
	     \rho\left(x+t\frac{x-x_0}{\abs{x-x_0}}\right) \leq \rho(x+t\vec{v})\leq \rho\left(x-t\frac{x-x_0}{\abs{x-x_0}}\right).
	 \end{equation*}
	 So that, integrating for $t\in [0,R]$ we get
	 \begin{equation*}
	     \int_0^R \abs{(x-x_0) + t\frac{x-x_0}{\abs{x-x_0}}}^{-\alpha} dt \leq \lambda(x,\vec{v},R)\leq \int_0^R \abs{(x-x_0) - t\frac{x-x_0}{\abs{x-x_0}}}^{-\alpha} dt.
	 \end{equation*}
	 Then
	 \begin{equation*}
	     \int_0^R (t+\abs{x-x_0})^{-\alpha} dt \leq \lambda(x,\vec{v},R)\leq \int_0^R \abs{\abs{x-x_0} - t}^{-\alpha} dt,
	 \end{equation*}
	 or, for $R$ large
	 \begin{align*}
	     \frac{1}{1-\alpha}&\left[(\abs{x-x_0}+R)^{1-\alpha} - \abs{x-x_0}^{1-\alpha}\right]\\ &\leq \lambda(x,\vec{v},R)\\ &\leq \int_{0}^{\abs{x-x_0}} (\abs{x-x_0}-t)^{-\alpha} dt + \int_{\abs{x-x_0}}^R (t-\abs{x-x_0})^{-\alpha} dt\\
	     &= \frac{\abs{x-x_0}^{1-\alpha}}{1-\alpha} + \frac{1}{1-\alpha}(R-\abs{x-x_0})^{1-\alpha},
	 \end{align*}
	 as desired.
\end{proof}

\begin{prop}
For $x_0\in\Rn$ and $0<\alpha<1$, the density $\rho(x)=\abs{x-x_0}^{-\alpha}$ is homogeneous and isotropic.	
\end{prop}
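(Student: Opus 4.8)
The plan is to treat the two properties separately, exploiting the two tools already available: the main Theorem of Section~\ref{S.3} for homogeneity, and Lemma~\ref{lemma:functionlambdaaboveandbelow} for isotropy.

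For homogeneity I would simply check that $\rho$ satisfies the hypothesis of the Theorem, namely that $\rho\in A_\infty(\Rn)$. Recall from the examples in Section~\ref{S.2} that $\rho(x)=\abs{x-x_0}^{\beta}$ lies in $A_\infty$ precisely when $\beta>-n$. Here $\beta=-\alpha$ with $0<\alpha<1$, so $\beta=-\alpha>-1\geq -n$ (since $n\geq 1$), and therefore $\rho\in A_\infty(\Rn)$. Homogeneity at the large scales then follows at once from the Theorem, with no further computation.

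For isotropy the essential remark is that both sides of the inequality in Lemma~\ref{lemma:functionlambdaaboveandbelow} are \emph{independent of the direction} $\vec v$ and share the same leading order as $R\to\infty$. Writing $a=\abs{x-x_0}$, I would divide the lower and upper bounds by $R^{1-\alpha}/(1-\alpha)$ and let $R\to\infty$. Using $(a+R)^{1-\alpha}=R^{1-\alpha}(1+a/R)^{1-\alpha}$ and $(R-a)^{1-\alpha}=R^{1-\alpha}(1-a/R)^{1-\alpha}$, both normalized bounds tend to $1$, since the constant terms $a^{1-\alpha}/R^{1-\alpha}$ vanish while $(1\pm a/R)^{1-\alpha}\to 1$. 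Hence, for every fixed $x$ and every $\vec v\in S^{n-1}$,
\begin{equation*}
	\frac{\lambda(x,\vec v,R)}{R^{1-\alpha}/(1-\alpha)}\xrightarrow[R\to\infty]{}1,
\end{equation*}
and the convergence depends neither on $\vec v$ (the Lemma's bounds are direction-free) nor on the particular base point $x$.

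With this common asymptotic in hand, isotropy follows by dividing the relations for $(x_1,\vec{v_1})$ and $(x_2,\vec{v_2})$: the factor $R^{1-\alpha}/(1-\alpha)$ cancels, giving
\begin{equation*}
	\frac{\lambda(x_1,\vec{v_1},R)}{\lambda(x_2,\vec{v_2},R)}
	=\frac{\lambda(x_1,\vec{v_1},R)\big/\big(R^{1-\alpha}/(1-\alpha)\big)}{\lambda(x_2,\vec{v_2},R)\big/\big(R^{1-\alpha}/(1-\alpha)\big)}\xrightarrow[R\to\infty]{}\frac{1}{1}=1
\end{equation*}
for all $x_1,x_2\in\Rn$ and $\vec{v_1},\vec{v_2}\in S^{n-1}$, which is exactly \eqref{iso.def}. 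I do not anticipate a genuine obstacle here: the whole content is already packaged in the Lemma, and the only thing to verify is that its two direction-free bounds are asymptotically equivalent to the same power $R^{1-\alpha}$. The one point worth stating explicitly is that, because those bounds are uniform in $\vec v$, the limit is attained uniformly over directions, so nothing goes wrong when the two observers look along different rays.
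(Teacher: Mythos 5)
Your proposal is correct and follows essentially the same route as the paper: homogeneity comes from the $A_\infty$ membership of the power weight together with the main Theorem of Section~\ref{S.3}, and isotropy comes from the direction-free bounds of Lemma~\ref{lemma:functionlambdaaboveandbelow}, both of which are asymptotic to $R^{1-\alpha}/(1-\alpha)$. Your normalization of each $\lambda$ by $R^{1-\alpha}/(1-\alpha)$ before taking the ratio is only a cosmetic repackaging of the paper's direct sandwich of $\lambda(x_1,\vec{v_1},R)/\lambda(x_2,\vec{v_2},R)$ between the lower/upper and upper/lower bound quotients.
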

\begin{proof}
Take $x_1$, $x_2\in \Rn$; $\vec{v_1}, \vec{v_2}\in S^{n-1}$ and $R$ large. Applying Lemma~\ref{lemma:functionlambdaaboveandbelow} we have
\begin{align*}
    \frac{\frac{1}{1-\alpha}\left[(\abs{x_1-x_0}+R)^{1-\alpha}-\abs{x_1-x_0}^{1-\alpha}\right]}{\dfrac{\abs{x_2-x_0}^{1-\alpha}}{1-\alpha}+ \dfrac{1}{1-\alpha}(R-\abs{x_2-x_0})^{1-\alpha}} &\leq\dfrac{\lambda(x_1,\vec{v_1},R)}{\lambda(x_2,\vec{v_2},R)}\\
    &\leq\frac{\dfrac{\abs{x_1-x_0}^{1-\alpha}}{1-\alpha}+ \dfrac{1}{1-\alpha}(R-\abs{x_1-x_0})^{1-\alpha}}{\frac{1}{1-\alpha}\left[(\abs{x_2-x_0}+R)^{1-\alpha}-\abs{x_2-x_0}^{1-\alpha}\right]}.
\end{align*}
The first and the last terms in the above inequalities tend to one for $R\to \infty$ and we are done, since $\rho$ is a Muckenhoupt weight.
\end{proof}

\medskip
\subsection*{Funding} 
This work was supported by the Ministerio de Ciencia, Tecnolog\'ia e Innovaci\'on-MINCYT in Argentina: Consejo Nacional de Investigaciones Cient\'ificas y T\'ecnicas-CONICET; and Universidad Nacional del Litoral-UNL.



\providecommand{\bysame}{\leavevmode\hbox to3em{\hrulefill}\thinspace}
\providecommand{\MR}{\relax\ifhmode\unskip\space\fi MR }
\providecommand{\MRhref}[2]{%
	\href{http://www.ams.org/mathscinet-getitem?mr=#1}{#2}
}
\providecommand{\href}[2]{#2}


\medskip
\noindent{\footnotesize
\noindent\textit{Affiliation.\,}
\textsc{Instituto de Matem\'{a}tica Aplicada del Litoral, UNL, CONICET.}

\smallskip
\noindent\textit{Address.\,}\textmd{CCT CONICET Santa Fe, Predio ``Dr. Alberto Cassano'', Colectora Ruta Nac.~168 km 0, Paraje El Pozo, S3007ABA Santa Fe, Argentina.}

\smallskip
\noindent \textit{E-mail address.\, }Hugo Aimar (corresponding author), \verb|haimar@santafe-conicet.gov.ar|, 
Federico Morana, \verb|fmorana@santafe-conicet.gov.ar|
}

\end{document}